\renewcommand{\a}{\boldsymbol{a}}
\newcommand{\x}{\boldsymbol{x}}
\newcommand{\w}{\boldsymbol{w}}
\newcommand{\f}{\boldsymbol{f}}
\newcommand{\g}{\boldsymbol{g}}
\newcommand{\vphi}{\boldsymbol{\varphi}}
\newcommand{\m}{\boldsymbol{m}}
\newcommand{\bmu}{\boldsymbol{\mu}}
\newcommand{\bnu}{\boldsymbol{\nu}}
\newcommand{\R}{\mathbb{R}}
\newcommand{\Zp}{\mathbb{Z}_{\geq 0}}
\newcommand{\A}{\boldsymbol{A}}
\newcommand{\B}{\boldsymbol{B}}
\newcommand{\C}{\boldsymbol{C}}
\newcommand{\I}{\boldsymbol{I}}
\newif\ifshow
\newtheorem{remark}{\bfseries Remark}
\newtheorem{theorem}{\bfseries Theorem}
\title{\LARGE \bf
Approximate moment dynamics for polynomial and \\ trigonometric stochastic systems
}
\author{Khem Raj Ghusinga$^{1,*}$, Mohammad Soltani$^{1,*}$, Andrew Lamperski$^{2}$, Sairaj Dhople$^{2}$, Abhyudai Singh$^{1}$
\thanks{$^{*}$ These authors contributed equally.}	
\thanks{$^{1}$Khem Raj Ghusinga, Mohammad Soltani, and Abhyudai Singh are with the Department of Electrical and Computer Engineering, University of Delaware, Newark, DE, USA 19716.
        {\tt\small \{khem,msoltani,absingh\}@udel.edu}}%
\thanks{$^{2}$Andrew Lamperski, and Sairaj Dhople are with the Department of Electrical and Computer Engineering, University of Minnesota, Minneapolis, MN, USA  55455.
        {\tt\small \{alampers,sdhople\}@umn.edu}}
}
\begin{document}

\maketitle
\thispagestyle{empty}
\pagestyle{empty}

\begin{abstract}
Stochastic dynamical systems often contain nonlinearities which make it hard to compute probability density functions or statistical moments of these systems. For the moment computations, nonlinearities in the dynamics lead to unclosed moment dynamics; in particular, the time evolution of a moment of a specific order may depend both on moments of order higher than it and on some nonlinear function of other moments. The moment closure techniques are used to find an approximate, close system of equations the moment dynamics. In this work, we extend a moment closure technique based on derivative matching that was originally proposed for polynomial stochastic systems with discrete states to continuous state stochastic systems to continuous state stochastic differential equations, with both polynomial and trigonometric nonlinearities. We validate the technique using two examples of nonlinear stochastic systems.
\end{abstract}


\section{Introduction}
Stochastic dynamical systems appear in numerous contexts in physics, engineering, finance, economics, and biology (see, e.g., \cite{allen07,lande03,malliaris82,gardiner86,oksendal03}). In terms of mathematical characterization, the most useful quantity in analysis of stochastic systems is the probability density function (pdf). However, the pdf is analytically intractable for most systems. So, numerical techniques, such as Monte Carlo simulation, are employed to compute the pdf \cite{hes05,jup09}. Generally speaking, in analysis of many stochastic systems, the goal is often less ambitious than computing the pdf, and knowing only a few lower order moments (mean, variance, etc.) might suffice.

If the system under consideration has polynomial dynamics, then time evolution of various statistical moments can be computed by solving a system of coupled linear differential equations. However, a major drawback of using these moment equations is that except for a few special cases such as systems with linear dynamics, the differential equations for moments up to a given order consist of terms involving higher-order moments. This is known as the problem of \emph{moment closure}. A typical way around this is to truncate the system of ODEs to a finite system of equations, and \emph{close} the moment equations using some sort of approximation for a given moment in terms of moments of lower order \cite{lkk09,sih10,gil09,svs15,jdd14,sih07ny}. If the system under consideration involves nonlinearities such as trigonometric functions that often arise in swing equations, then the differential equations describing the moments involve moments of nonlinear functions of the state. In such cases, usage of moment closure schemes is rather limited.

For systems with polynomial dynamics, a number of moment closure techniques have been proposed to approximate a higher order moment in terms of lower order moments. Some of these techniques make prior assumptions on the distribution of the system, while others attempt to find a linear or nonlinear approximation of the moment dynamics \cite{Kuehn16,socha2008linearization}. One method that falls in the latter category is the derivative matching based closure \cite{SinghHespanhaDM}. Here, a nonlinear approximation of a given moment is obtained in terms of lower order moments by matching the derivatives of the original moment dynamics with the proposed approximate dynamics at some initial point in time. This method was originally proposed for approximating moment dynamics of biochemical reaction systems which are described via discrete states \cite{SinghHespanhaDM}. Given the attention received by this approach and its superior performance than several moment closure schemes \cite{svs15,svn14}, we apply it to close moments for nonlinear stochastic systems described via stochastic differential equations (SDEs). We further extend the method to include trigonometric functions in the dynamics. Our results show that the derivative matching technique provides reasonably good approximation to the moment dynamics.

Remainder of the paper is organized as follows. In section II, we describe the moment equations for a stochastic differential equations, and discuss the moment closure problem. In section III, we discuss the the derivative matching moment closure technique for SDEs and provide a proof for it. We illustrate the technique via examples in section IV. The paper is concluded in section V, along with a few directions of future research. 

 {\it Notation:} Vectors and matrices are denoted in bold. The set of real numbers and non-negative integers are respectively denoted by $\R$ and $\Zp$. The expectation  is represented by angled-brackets, $\left< \right>$. $\I$ is used to denote the Identity matrix.

 \section{Moment Dynamics of an SDE}
Consider a $n$-dimensional stochastic differential equation (SDE) represented as
\begin{equation}
d\x = \f(\x,t) \,dt + \g(\x,t) \,d\w_t,
\label{eq:sde}
\end{equation}
where $\x = \begin{bmatrix} x_1 & x_2 & \ldots & x_n \end{bmatrix}^\top \in \R^n$ is the state vector; $\f(\x,t)=\begin{bmatrix} f_1(\x,t) & f_2(\x,t) & \ldots & f_n(\x,t) \end{bmatrix}^\top: \R^n \times [0,\infty) \rightarrow \R^n$ and $\g(\x,t)=\begin{bmatrix} g_1(\x,t) & g_2(\x,t) & \ldots & g_n(\x,t) \end{bmatrix}^\top:\R^n \times [0,\infty) \rightarrow \R^n$ describe the system dynamics; and $\w_t$ is the $n$-dimensional Weiner process satisfying
\begin{align}
\left<d \w_t \right>=\mathbf{0}, \quad \left<d\w_t \, d\w_t^\top\right>=\I \,dt,
\end{align}
where $\I$ is an $n\times n$ Identity matrix. We further assume that sufficient mathematical requirements for the existence of the solution to  \eqref{eq:sde} are satisfied (see, e.g., \cite{oksendal03}).

The moments of an SDE can be obtained using the well-known It\^o formula  \cite{oksendal03}. This formula states that for any smooth scalar-valued function $h(\x(t))$, the increment is given by
\begin{equation}\label{eq:ito}
dh(\x(t)) = \frac{\partial h(\x(t))}{\partial x} \left( f(\x(t)) dt + g(\x(t)) d \w (t) \right) \\
+\frac{1}{2} \text{ Tr } \left(\frac{\partial^2 h(\x(t))}{\partial x^2} g(\x(t)) g(\x(t))^\top\right)dt.
\end{equation}
Taking expectations and dividing both sides by $dt$ gives the following
differential equation
\begin{equation}
\label{eq:generatorDiffEq}
\frac{d}{dt} \langle h(\x(t)) \rangle
=
\left\langle \frac{\partial h(\x(t))}{\partial x} 
f(\x(t)) 
+\frac{1}{2} \text{ Tr } \left(
\frac{\partial^2 h(\x(t))}{\partial x^2} g(\x(t)) 
g(\x(t))^\top
\right) \right\rangle .
\end{equation}
Let $h(\x)$ be monomial of the form
\begin{equation}
h(\x)=x_1^{m_1} x_2^{m_2} \ldots x_n^{m_n}=:\x^{[\m]},
\end{equation}
where $\m={\begin{bmatrix}m_1 &  m_2 & \ldots & m_n \end{bmatrix}}^\top \in \Zp^n $, then $\left<h(x)\right>$ represents a moment of $\x$. For a given $\m$, we represent the moment by $\mu_{\m}=\left< \x^{[\m]}\right>$. Using  \eqref{eq:generatorDiffEq}, dynamics of $\mu_{\m}$ evolves according to 
\begin{equation}
\frac{d \mu_{\m}}{dt}=\sum_{i=1}^{n} \left<f_i \frac{\partial \x^{[\m]}}{\partial x_i}\right>
 +\frac{1}{2}\sum_{i=1}^{n}\sum_{j=1}^{n} \left<(\g \g^\top)_{ij} \frac{\partial^2 \x^{[\m]}}{\partial x_i \partial x_j}\right>.
 \label{eq:momentdyn}
\end{equation}
The sum $\sum_{j=1}^{n}m_j$ is referred to as the order of the moment.

As long as $\f(\x,t)$ and $\g(\x,t)$ are linear in $\x$, a moment of a certain order is a linear combination of other moments of same or smaller order \cite{socha2008linearization}. Hence, if we construct a vector $\bmu$ consisting of all moments up to the $M^{\rm th}$ order moments of $\x$, its time evolution is captured by the solution of the following system of linear differential equations:
\begin{equation}\label{eq:lin}
\frac{d\bmu}{dt}=\a+\A\bmu.
\end{equation}
Here, $\bmu={\begin{bmatrix}\mu_{\m_1} &  \mu_{\m_2} & \ldots &  \mu_{\m_k} \end{bmatrix}}^\top, \m_p \in \Zp^{n}, \forall p \in \{1,2,\ldots, k\}$ is assumed to be a vector of $k$ elements. The vector $\a$ and the matrix $\A$ are determined by the form of $\f(\x,t)$ and $\g(\x,t)$. Under some mild assumptions, standard tools from linear systems theory can be used to obtain solution to \eqref{eq:lin}, and it is given by
\begin{equation}
\bmu(t)=-\A^{-1}\a+e^{\A t} \left( \bmu(0)+\A^{-1}\a\right).
\end{equation} 

\begin{remark}\label{rem:mu}
It is easy to see that there are $(m+n-1)!/(m! (n-1)!)$ moments of order $m$.  Therefore, the dimension of the vector $\bmu$ in \eqref{eq:lin} is given by
\begin{equation}
k=\sum_{m=1}^{M} \frac{(m+n-1)!}{m! (n-1)!}= \frac{(M+n)!}{M! n!}-1.
\end{equation}
Without loss of generality, we can assume that the elements in $\bmu$ are stacked up in graded lexicographical order. That is, the first $n$ elements in $\bmu$ are the moments of first order, next $n(n+1)/2$ elements are moments of the second order, and so on. $\blacksquare$
\end{remark}

In general, when $\f(\x,t)$ and $\g(\x,t)$ are polynomials in $\x$, the time derivative of a moment might depend on moments of order higher than it. To see this, consider the following one dimensional cubic drift
\begin{equation}
dx = -  x^3 dt + dw_t.
\end{equation} 
The time evolution of a moment of order $m \geq 1$ is given by
\begin{align}
\frac{d\left<x^m\right>}{dt}&=\left<\frac{\partial x^m}{\partial x} (-x^3)\right> + \frac{1}{2}\left<\frac{\partial x^m}{\partial x^2}\right> \\
&=-m \left<x^{m+2}\right> + \frac{m(m-1)}{2}\left<x^{m-1}\right>,
\end{align}
which clearly depends upon the $(m+1)^{th}$ moment. In other words, the moment dynamics is not closed. Thus, for systems with nonlinear dynamics, the moment equations in \eqref{eq:lin} need to be modified to a general form
 \begin{equation}
\frac{d\bmu}{dt}=\a+\A\bmu + \B\overline{\bmu},
\label{eqn:openmomentdynamics}
\end{equation}
where $\overline{\bmu} \in \R^r $ is a vector of moments of order greater than or equal to $M+1$. 

The solution to \eqref{eqn:openmomentdynamics} is generally obtained by approximating the higher order moments in $\overline{\bmu}$ as, possibly nonlinear, functions of lower order moments in $\bmu$. The approximation might be made by assuming some underlying distribution, or by applying some other physical principle \cite{Kuehn16,socha2008linearization}. Essentially the moment closure methods translate to finding an  approximation of  \eqref{eqn:openmomentdynamics} by a system of equations 
\begin{subequations}
\begin{align}
& \frac{d\bnu}{dt}=\a+\A\bnu + \B\overline{\vphi}(\bnu), \\
& \bnu={\begin{bmatrix}\nu_{\m_1} & \nu_{\m_2} & \ldots &  \nu_{\m_k} \end{bmatrix}}^\top,
\label{eqn:closedapproxmomentdynamics}
 \end{align}
 \end{subequations}
where the function $\overline{\vphi}:\R^k \rightarrow \R^r $ is chosen such that $\bmu(t) \approx \bnu(t)$. Here, $M$ is called the order of truncation. 

If the functions $\f(\x,t)$ are not polynomials, then it may not be possible to obtain a convenient form like \eqref{eqn:openmomentdynamics} for the moments. For instance, consider the following differential equation
\begin{align}
dx_1&=x_2 dt \\
dx_2 &= - \sin x_1  dt + dw_t.
\end{align} 
Here, the time evolution of $\left<x_2\right>$ is given by
\begin{align}
\frac{d\left<x_2\right>}{dt}&=-\left<\sin x_1 \right>,
\end{align}
which depends a nonlinear moment $\left<\sin x_1\right>$. Although, \eqref{eq:generatorDiffEq} can be used to write the dynamics of $\left<\sin x_1\right>$, it will further depend on other trigonometric moments. In Section IV, we will consider  a system of this type and perform moment closure. In the next section, we first discuss the derivative matching closure scheme for SDEs.

\section{Derivative Matching Moment Closure Technique for SDEs}
In this section, we describe the derivative matching based moment closure technique for SDEs. As the name suggests, the closure is performed by matching time derivatives of $\bmu(t)$ and $\bnu(t)$. This technique was originally proposed for approximating moment dynamics of discrete--state continuous--time systems \cite{SinghHespanhaDM, SinghHespanhaLogNormal}. The derivative matching technique attempts to approximate $\bmu(t)$ by some $\bnu(t)$ such that a sufficiently large number of their derivatives match point-wise. The idea being that if the values of these two vectors at some time $t_0$ are equal, and their derivatives up to certain order also match, then they would closely follow each other for some time interval after $t_0$. More precisely, for each $\delta >0$ and $N \in \Zp$, $\exists\; T \in \R$ such that if 
\begin{equation}
\bmu(t_0)=\bnu(t_0) \implies \quad \frac{d^i \bmu(t)}{dt^i} \Bigg \lvert_{t=t_0}=\frac{d^i \bnu(t)}{dt^i} \Bigg \lvert_{t=t_0},
\label{eq:dm}
\end{equation}
hold for a $t_0 \in [0,\infty)$ and $i={1,2,\ldots,N}$, then
\begin{equation}
\|\bmu(t)-\bnu(t)\| \leq \delta, \quad \forall t \in [t_0, T].
\label{eq:munuclose}
\end{equation}
Further, one can obtain the bound in \eqref{eq:munuclose} for the interval $[t_0,\infty)$ under some appropriate asymptotic conditions \cite{hespanha2005polynomial}.

To construct the closed moment dynamics, we follow similar steps as \cite{SinghHespanhaDM}. Consider a vector $\overline{\m} \in \Zp^n$ such that $\mu_{\overline{\m}}$ is an element in $\overline{\bmu}$. We approximate $\mu_{\overline{\m}}$ as a function of elements in the vector $\bmu$. Denoting the corresponding approximation of $\mu_{\overline{\m}}$ in $\overline{\vphi}(\bmu)$ by $\phi_{\overline{\m}}(\bmu)$, the following separable form is considered
\begin{equation}
\phi_{\overline{\m}}(\bmu)=\prod_{p=1}^{k}\left(\mu_{\m_p}\right)^{\alpha_p},
\label{eq:sdmmc}
\end{equation}
where $\alpha_p$ are appropriately chosen constants. Generally speaking, \eqref{eq:dm} is a strong requirement and it is not possible to find the coefficients $\alpha_p$ such that it holds for every initial condition. We, therefore, consider a relaxation of this by seeking  $\alpha_p$ such that the derivatives match for a deterministic initial condition $\x(t_0)=\x_0$. Next, we state a theorem showing that the coefficients $\alpha_p$ can be obtained by solving a system of linear equations. Before that, we define a short-hand notation that is used in the theorem. 
For two vectors $\hat{\m}={\begin{bmatrix}
	\hat{m}_1 & \hat{m}_2 & \ldots & \hat{m}_n
	\end{bmatrix}}^\top \in \Zp^{n}$ and $\breve{\m}={\begin{bmatrix}
	\breve{m}_1 & \breve{m}_2 & \ldots & \breve{m}_n
	\end{bmatrix}}^\top \in  \Zp^{n}$,  we have the following notation
\begin{subequations}
	\begin{equation}
	\C_{\breve{\m}}^{\hat{\m}}:= C_{\breve{m}_1}^{\hat{m}_1}C_{\breve{m}_2}^{\hat{m}_2}\cdots C_{\breve{m}_n}^{\hat{m}_n},
	\label{eq:boldCdef}
	\end{equation}
	where 
	\begin{equation}
	C_{l}^{h}=
	\begin{cases}
	\frac{h!}{l! (h-l)!}, h \geq l,\\
	0, h < l.
	\end{cases}
	\end{equation}
\end{subequations}

\begin{theorem}
For each element $\mu_{\overline{\m}}$ of the vector $\overline{\bmu}$, assume that the corresponding moment closure function $\phi_{\overline{\m}(\bmu)}$ in the vector $\overline{\vphi}(\bmu)$ is chosen according to  \eqref{eq:sdmmc} with the coefficients $\alpha_p$ chosen as the unique solution to the following system of linear equations
\begin{equation} \label{eqn:coeff}
\C_{[\m_s]}^{[\overline{\m}]}=\sum_{p=1}^{k} \alpha_p \C_{[\m_s]}^{[\m_p]}, \quad s=1,2,\cdots,k.
\end{equation}
Then, for every initial condition $\x(t_0)=\x_0 \in \R^n$, we have that
\begin{subequations}
\begin{align}
 \bmu(t_0)=\bnu(t_0) & \implies \frac{d \bmu(t)}{dt}\Bigg|_{t=t_0}=\frac{d \bnu(t)}{dt}\Bigg|_{t=t_0} \\
& \implies \frac{d^2 \bmu(t)}{dt^2}\Bigg|_{t=t_0}=\frac{d^2 \bnu(t)}{dt^2}\Bigg|_{t=t_0}.
\end{align}
\end{subequations}
\end{theorem}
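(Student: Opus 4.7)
The plan is to verify both implications by evaluating everything at $t=t_0$, where the deterministic hypothesis $\x(t_0)=\x_0$ collapses every expectation, so that $\mu_{\m}(t_0)=\nu_{\m}(t_0)=\x_0^{[\m]}$ for every $\m\in\Zp^n$. Writing the generator as $Lh(\x)=\sum_i f_i(\x)\frac{\partial h}{\partial x_i}+\frac{1}{2}\sum_{i,j}(\g\g^\top)_{ij}(\x)\frac{\partial^2 h}{\partial x_i\partial x_j}$, equation \eqref{eq:generatorDiffEq} then reads $d\mu_{\m}/dt|_{t=t_0}=L(\x^{[\m]})(\x_0)$. Combining this with the elementary identity $\prod_i\partial_{x_i}^{k_i}\x^{[\m]}=\big(\prod_i k_i!\big)\,\C_{\boldsymbol{k}}^{\m}\,\x^{[\m-\boldsymbol{k}]}$ yields the decisive expansion
\[
L(\x^{[\m]})=\sum_{\boldsymbol{k}} s_{\boldsymbol{k}}(\x)\,\C_{\boldsymbol{k}}^{\m}\,\x^{[\m-\boldsymbol{k}]},
\]
where each $s_{\boldsymbol{k}}$ depends on $\f,\g$ but not on $\m$, and $\boldsymbol{k}\in\Zp^n$ ranges only over multi-indices of total order one (from the drift) and two (from the diffusion).

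For the first-derivative claim, since $\bmu(t_0)=\bnu(t_0)$, the right-hand sides of \eqref{eqn:openmomentdynamics} and \eqref{eqn:closedapproxmomentdynamics} coincide if and only if $\overline{\vphi}(\bmu(t_0))=\overline{\bmu}(t_0)$ component-wise. Under the separable form \eqref{eq:sdmmc}, $\phi_{\overline{\m}}(\bmu(t_0))=\prod_p(\x_0^{[\m_p]})^{\alpha_p}=\x_0^{[\sum_p\alpha_p\m_p]}$, which equals $\mu_{\overline{\m}}(t_0)=\x_0^{[\overline{\m}]}$ for every $\x_0$ precisely when $\sum_p\alpha_p\m_p=\overline{\m}$. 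These $n$ linear constraints are exactly the instances of \eqref{eqn:coeff} obtained by taking $\m_s$ to be the unit vectors, so they are contained in the hypothesized system.

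For the second-derivative claim, differentiating both dynamics once more and using the first-order match already established, $d^2\bmu/dt^2|_{t_0}=d^2\bnu/dt^2|_{t_0}$ reduces, for each $\overline{\m}$, to the requirement that $d\mu_{\overline{\m}}/dt|_{t_0}=\sum_p(\partial\phi_{\overline{\m}}/\partial\nu_{\m_p})(\bnu(t_0))\cdot d\mu_{\m_p}/dt|_{t_0}$. Applying the chain-rule identity $\partial\phi_{\overline{\m}}/\partial\nu_{\m_p}=\alpha_p\phi_{\overline{\m}}/\nu_{\m_p}$ and substituting the generator expansion from paragraph one on both sides turns this into the polynomial identity in $\x_0$
\[
\sum_{\boldsymbol{k}}s_{\boldsymbol{k}}(\x_0)\,\C_{\boldsymbol{k}}^{\overline{\m}}\,\x_0^{[\overline{\m}-\boldsymbol{k}]}=\sum_{\boldsymbol{k}}s_{\boldsymbol{k}}(\x_0)\,\x_0^{[\overline{\m}-\boldsymbol{k}]}\sum_p\alpha_p\,\C_{\boldsymbol{k}}^{\m_p}.
\]
Matching the coefficient of each independent term $s_{\boldsymbol{k}}(\x_0)\,\x_0^{[\overline{\m}-\boldsymbol{k}]}$ produces exactly $\C_{\boldsymbol{k}}^{\overline{\m}}=\sum_p\alpha_p\,\C_{\boldsymbol{k}}^{\m_p}$, which are the subset of equations in \eqref{eqn:coeff} with $\m_s=\boldsymbol{k}$ of total order at most two, and therefore hold by hypothesis.

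The hard part will be the coefficient bookkeeping in the previous paragraph: isolating the combinatorial factor $\C_{\boldsymbol{k}}^{\m}$ cleanly from the drift, diagonal-diffusion, and off-diagonal-diffusion contributions in $L(\x^{[\m]})$, and correctly handling the symmetry $(i,j)\leftrightarrow(j,i)$ in the diffusion sum so that the binomial identity emerges with the right normalization. A minor subtlety is that \eqref{eq:sdmmc} can involve non-integer powers, so the identities above should be read as polynomial identities in $\x_0$ verified on the open set $\{x_{0,i}>0\}$ and then extended globally; the claimed uniqueness of the solution to \eqref{eqn:coeff} can be established by a triangular argument using the graded-lexicographic ordering of $\bmu$ from Remark~\ref{rem:mu}.
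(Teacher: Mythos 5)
Your proposal is correct and follows essentially the same route as the paper: reduce both implications to the two conditions $\phi_{\overline{\m}}(\bmu(t_0))=\mu_{\overline{\m}}(t_0)$ and $\frac{d}{dt}\phi_{\overline{\m}}(\bmu(t))\big|_{t_0}=\frac{d}{dt}\mu_{\overline{\m}}(t)\big|_{t_0}$, collapse expectations via the deterministic initial condition, and identify the resulting constraints with the equations \eqref{eqn:coeff} for $\m_s$ of order one and two. Your generator expansion $L(\x^{[\m]})=\sum_{\boldsymbol{k}}s_{\boldsymbol{k}}(\x)\,\C_{\boldsymbol{k}}^{\m}\,\x^{[\m-\boldsymbol{k}]}$ is just a unified repackaging of the paper's explicit drift, diagonal-diffusion, and off-diagonal-diffusion bookkeeping in \eqref{eq:pf21}--\eqref{eq:pf23}.
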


\begin{proof}
It is sufficient to prove that for each element $\mu_{\overline{\m}}$ of $\overline{\bmu}$ and its corresponding moment closure function $\phi_{\overline{\m}}(\bmu)$, we have the following:
\begin{subequations}
\begin{align}
\mu_{\overline{\m}}(t_0)&=\phi_{\overline{\m}}(\bmu(t_0)), \label{eq:pf1} \\
\frac{d\mu_{\overline{\m}(t)}}{dt} \Bigg \lvert_{t=t_0}&=\frac{d\phi_{\overline{\m}}(\bmu(t))}{dt} \Bigg \lvert_{t=t_0}.\label{eq:pf2}
\end{align}
\end{subequations}
We first show that  \eqref{eq:pf1} holds. Since initial conditions are $\x(t_0)=\x_0$ with probability one, we have
\begin{subequations}
\begin{align}
\mu_{\overline{\m}}(t_0)&=\x_0^{[\overline{\m}]}, \label{eq:pf11} \\
\phi_{\overline{\m}}(\bmu(t_0))&=\prod_{p=1}^{k}\left(\x_0^{[\m_p]}\right)^{\alpha_p}=\x_0^{[\sum_{p=1}^{k}\alpha_p \m_p]}.\label{eq:pf12}
\end{align}
\end{subequations}
Recall Remark \ref{rem:mu}, that without loss of generality, the moments in vector $\bmu$ can be assumed to be stacked in graded lexicographical order. Thus, the first $n$ elements of $\bmu$ are moments of order one. This allows us to write
\begin{subequations}
\begin{align}
\overline{\m}&={\begin{bmatrix}
\C_{{\m_1}}^{\overline{\m}},\C_{{\m_2}}^{\overline{\m}}, \ldots,\C_{{\m_n}}^{\overline{\m}}
\end{bmatrix}}^\top, \label{eq:mbar}\\
\m_p&={\begin{bmatrix}
\C_{{\m_1}}^{\m_p},\C_{{\m_2}}^{\m_p}, \ldots,\C_{{\m_n}}^{\m_p}
\end{bmatrix}}^\top, \forall p={1,2,\ldots,k}, \label{eq:mp}
\end{align}
\end{subequations}
where a vector $\m_i \in \Zp^n, i=1,2,\ldots,n$ has $1$ at the $i^{th}$ position, and rest of the elements are zero. Using these relations, and  \eqref{eq:boldCdef} for $s=1,2,\ldots,n$, we obtain 
\begin{equation}
\overline{\m}=\sum_{p=1}^{k}\alpha_p \m_p.
\label{eq:O1relation}
\end{equation}
Substituting this result in \eqref{eq:pf11} proves  \eqref{eq:pf1}.

Next, we prove that \eqref{eq:pf2} holds. For this part, we assume that $\x_0={\begin{bmatrix}x_{01}, x_{02}, \ldots, x_{0n}\end{bmatrix}}^\top \in \R^n$. Consider
\begin{subequations}
\begin{align}
&\frac{d{\phi}_{\overline{\m}}(\bmu(t))}{dt}\Bigg |_{t=t_0} \\
& \qquad = \phi_{\overline{\m}}(\bmu(t_0)) \sum_{p=1}^{k}\alpha_p \frac{\frac{d\mu_{\m_p}(t)}{dt}\Big |_{t=t_0}}{\mu_{\m_p}(t_0)} \\
& \qquad = \sum_{p=1}^{k}\alpha_p \x_0^{[\overline{\m}-\m_p]}\frac{d\mu_{\m_p}(t)}{dt}\Bigg |_{t=t_0}.
\end{align}
\end{subequations}
Assuming $\m_p ={\begin{bmatrix}m_{p1} & m_{p2} & \ldots & m_{pn}\end{bmatrix}}^\top  \in \Zp^{n}$, we can use  \eqref{eq:momentdyn} to obtain the expression for $ \displaystyle \frac{d\mu_{\m_p}(t)}{dt}=\frac{d\left<\x^{[\m_p]}\right>}{dt}$. This enables us to write
\begin{subequations}
\small{\begin{align}
\frac{d \phi_{\overline{\m}}(\bmu(t))}{dt}\Bigg |_{t=t_0} 
&=\sum_{p=1}^{k}\alpha_p \x_0^{[\overline{\m}]} \sum_{i=1}^{n} \frac{m_{pi}}{x_{0i}}f_i(\x_0,t_0)+ \frac{1}{2}\sum_{p=1}^{k}\alpha_p \x_0^{[\overline{\m}]}\sum_{i=1}^{n}\frac{m_{pi}(m_{pi}-1)}{x_{0i}^2}\left(g(\x_0,t_0)g^\top(\x_0,t_0)\right)_{ii}\nonumber \\
& \qquad  + \frac{1}{2}\sum_{p=1}^{k}\alpha_p \x_0^{[\overline{\m}]}\sum_{\substack{i,j=1\\ i\ne j}}^{n}\frac{m_{pi} m_{pj}}{x_{0i}x_{0j}}\left(g(\x_0,t_0)g^\top(\x_0,t_0)\right)_{ij} \\
&  =\x_0^{[\overline{\m}]} \sum_{i=1}^{n} \frac{\sum_{p=1}^{k}\alpha_p m_{pi}}{x_{0i}}f_i(\x_0,t_0) + \frac{1}{2}\x_0^{[\overline{\m}]}\sum_{i=1}^{n}\frac{\sum_{p=1}^{k}\alpha_p m_{pi}(m_{pi}-1)}{x_{0i}^2}\left(g(\x_0,t_0)g^\top(\x_0,t_0)\right)_{ii}\nonumber \\
& \qquad + \frac{1}{2}\x_0^{[\overline{\m}]}\sum_{\substack{i,j=1\\ i\ne j}}^{n}\frac{\sum_{p=1}^{k}\alpha_p m_{pi} m_{pj}}{x_{0i}x_{0j}}\left(g(\x_0,t_0)g^\top(\x_0,t_0)\right)_{ij}.
\end{align}}
\end{subequations}
\normalsize

Comparing this with the expression for $\frac{d\mu_{\overline{\m}}}{dt}$ computed at $t=t_0$, which can be calculated from \eqref{eq:momentdyn} and assuming $\overline{\m}={\begin{bmatrix}\overline{m}_{1}, \overline{m}_{2}, \ldots, \overline{m}_{n}\end{bmatrix}}^\top  \in \Zp^n$, we require:
\begin{subequations}
\begin{align}
\sum_{p=1}^{k}\alpha_p m_{pi}&=\overline{m}_i, \label{eq:pf21}\\
\sum_{p=1}^{k}\alpha_p\frac{ m_{pi}(m_{pi}-1)}{2}&=\frac{\overline{m}_i(\overline{m}_i-1)}{2}, \label{eq:pf22}\\
\sum_{p=1}^{k}\alpha_p\frac{ m_{pi}m_{pj}}{2}&=\frac{\overline{m}_i \overline{m}_j}{2}.\label{eq:pf23}
\end{align}
\end{subequations}
Note that \eqref{eq:pf21} is nothing but the relation in \eqref{eq:O1relation} written element-wise. Further, we had assumed that the vector $\bmu$ has its elements stacked up in graded lexicographical order (Remark 1). In particular, the moments of second order start with the $(n+1)^{th}$ element. In that case, the equality in \eqref{eq:pf22} follows when relations in \eqref{eq:mbar}--\eqref{eq:mp} are used in \eqref{eq:boldCdef} for $s=n+1,2n+1,\cdots,n^2+1$ (i.e., the second order moments with one of the exponents as $2$ and rest of them as zeros). Likewise, \eqref{eq:pf23} holds for the rest of the second order moments wherein two exponents are $1$ and rest are zeros.
\end{proof}

\begin{remark}
It is worth noting that when the derivative--matching technique is applied for a discrete-state process, there is an error in matching the first two derivatives \cite{SinghHespanhaDM}. However, in case of a continuous state stochastic differential equation, the first two derivatives are matched exactly. Another important difference between the discrete state systems, and  continuous state systems is that in the latter, the first two derivatives are matched exactly regardless of the form of $\f$ and $\g$ whereas in the former,  one needs to assume polynomial form for the rates at which the states are changed. $\blacksquare$
\end{remark}

\begin{remark}
Although we do not have a proof, the solution to the system of linear equations in \eqref{eqn:coeff} results in integer values of the coefficients $\alpha_p$ for all examples we have solved thus far. $\blacksquare$
\end{remark}

\section{Numerical Validation}
In this section, we illustrate the derivative matching technique on two examples. The first example is a Van der Pol oscillator  that frequently arises in many engineering applications \cite{strogatz2014nonlinear}. In this case, the system dynamics consists of polynomial functions of the state vector. The second example is  a swinging pendulum subject to white noise. In this example, the dynamics consist of polynomial functions in one state and, and a trigonometric functions in another state. We show that the derivative matching technique can be straightforwardly applied to the second example.
\subsection{Van der Pol oscillator}
\begin{figure}[!b]
	\centering
	{\includegraphics[width=0.5\textwidth]{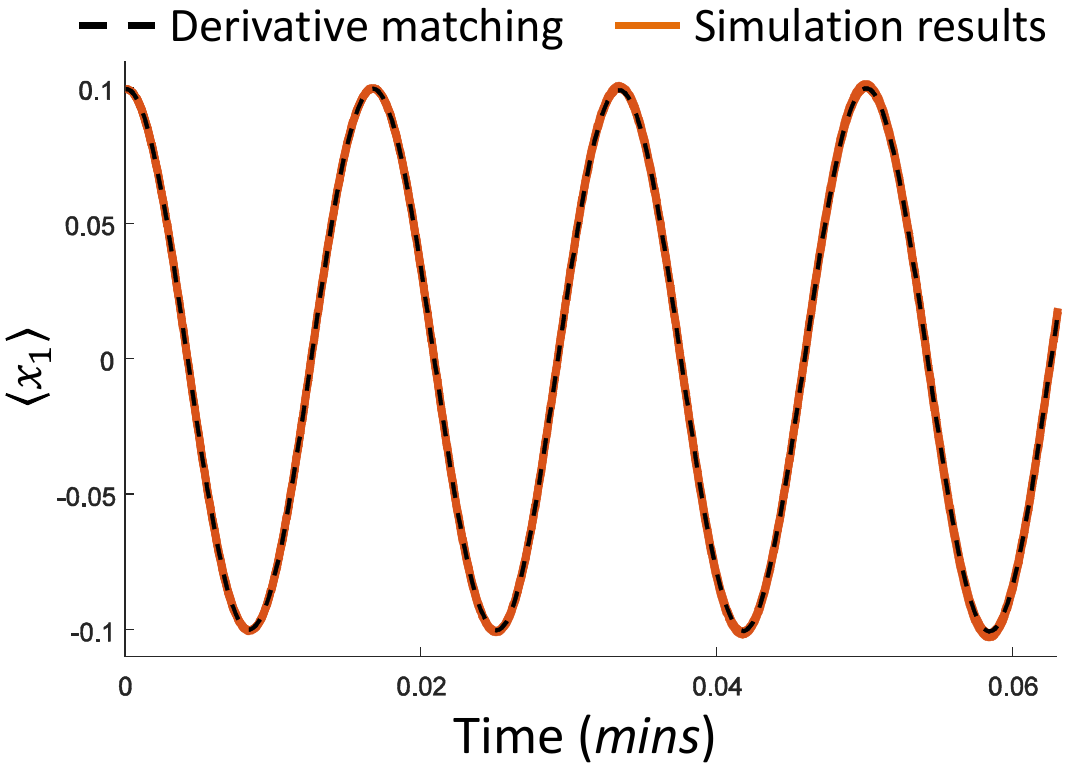}}
	\caption{Derivative matching technique replicates the oscillations of the Van der Pol oscillator quite reasonably. For this plot, the parameters values are $A=2.5$, $\omega_n =\omega_g=120\pi$, and $\epsilon=0.1$ . The initial conditions are taken as $x_1(0)=x_2(0)=0.1$.}
\end{figure}

In the deterministic setting, the Van der Pol oscillator is governed by the following second-order differential equation 
\begin{equation}
\frac{d^2 x}{d t^2 }-\epsilon  (1-x^2 )\frac{d x}{d t}+ \omega_n^2 x =A \cos(\omega_g t), 
\end{equation}
where $\epsilon$ is the bifurcation parameter, $\omega_n$ is the natural frequency, $\omega_g$ is the force frequency and $A$ is the force amplitude. A possible stochastic description of the oscillator could be to assume that the force is noisy, i.e., the actuators that apply the force also add a zero mean noise to the system. By choosing $x_1=x $ and $x_2 = \frac{d x }{d t}$, the oscillator dynamics could be written as
\begin{subequations}
	\begin{align}
	dx_1  = & x_2 dt,\\
	dx_2 =&\left( \epsilon  (1-x_1^2 )x_2- \omega_n^2 x_1+A \cos(\omega_g t)\right) dt + A dw_t. 
	\end{align}
\end{subequations} 
Suppose we are interested in the dynamics of $\left<x_1\right>$. To this end, we  write moment dynamics of this oscillator up to order two
\begin{subequations}
	\begin{align}	
	&	\frac{d\langle x_1 \rangle }{dt} =  \langle x_2 \rangle ,\\
	& 	\frac{d\langle x_2 \rangle }{dt} = \epsilon  (\langle x_2 \rangle -\langle x_1^2 x_2 \rangle )- \omega_n^2 \langle  x_1 \rangle +A \cos(\omega_g t) ,\\
	& 	\frac{d\langle x_1^2\rangle }{dt} = 2 \langle  x_1 x_2  \rangle ,\\
	& 	\frac{d\langle x_2^2 \rangle }{dt} =2  \epsilon  (\langle x_2^2  \rangle -\langle x_1^2 x_2^2 \rangle )- 2 \omega_n^2 \langle  x_1 x_2 \rangle  +2A \langle x_2 \rangle  \cos(\omega_g t)+ A^2 ,\\
	& 	\frac{d\langle x_1 x_2 \rangle }{dt} =\langle x_1^2 x_2 \rangle+  \epsilon  (\langle x_1 x_2 \rangle -\langle x_1^3 x_2 \rangle )- \omega_n^2 \langle  x_1^2  \rangle  +A \langle  x_1  \rangle \cos(\omega_g t)  .
	\end{align}\label{xx0}
\end{subequations}
As expected, the nonlinearities in the dynamics manifest  in unclosed moment dynamics, and the moment equations up to order two depend upon third and fourth order moments. In terms of notations in \eqref{eqn:openmomentdynamics}, we have $\bmu=\begin{bmatrix} \left<x_1\right> & \left<x_2\right> & \left<x_1^2\right> & \left<x_1 x_2 \right> & \left<x_2^2\right>\end{bmatrix}^\top$, and $
\overline{\bmu} = \left[\langle x_1^2 x_2 \rangle  \ \langle x_1^2 x_2^2 \rangle  \ \langle x_1^3 x_2 \rangle  \right]^\top$.

Applying the derivative matching closure as described in  Section III, we seek approximations of each element of $\overline{\bmu}$ in terms of those of $\bmu$ as in \eqref{eq:sdmmc}. Solving \eqref{eqn:coeff} for each of these yields the following approximations
\begin{subequations} \label{pendapprox}
	\begin{align}
	& \langle x_1^2 x_2 \rangle  \approx  \frac{\langle  x_1^2 \rangle \langle  x_1 x_2 \rangle^2 }{\langle x_1 \rangle^2 \langle x_2 \rangle  } , \\
	& \langle x_1^2 x_2^2 \rangle  \approx  \frac{\langle  x_1^2 \rangle \langle  x_1 x_2 \rangle^4 \langle  x_2^2 \rangle  }{\langle x_1 \rangle^4 \langle x_2 \rangle^4  } , \\
	& \langle x_1^3 x_2 \rangle  \approx  \frac{\langle  x_1^2 \rangle^3  \langle  x_1 x_2 \rangle^3}{\langle x_1 \rangle^6 \langle x_2 \rangle^2  } .
	\end{align}
\end{subequations}
Using the approximations from \eqref{pendapprox} in \eqref{xx0}, we obtain a closed set of moment equations. Fig. 1 compares the solution of $\left<x_1\right>$ with that of numerical simulations. Our results show an almost perfect match between the system with closure approximation and numerical simulations.

A caveat of the proposed derivative matching approximation is that, as in \eqref{pendapprox}, the mean of states appear in the denominator. Since the oscillator repeatedly crosses the zero, it is possible that some of these moments approach to zero. To avoid this, we add a small term $\delta$ to the denominator of approximations.

\subsection{Pendulum Swing}
\begin{figure}
	\centering
	\framebox{\includegraphics[width=0.4\textwidth]{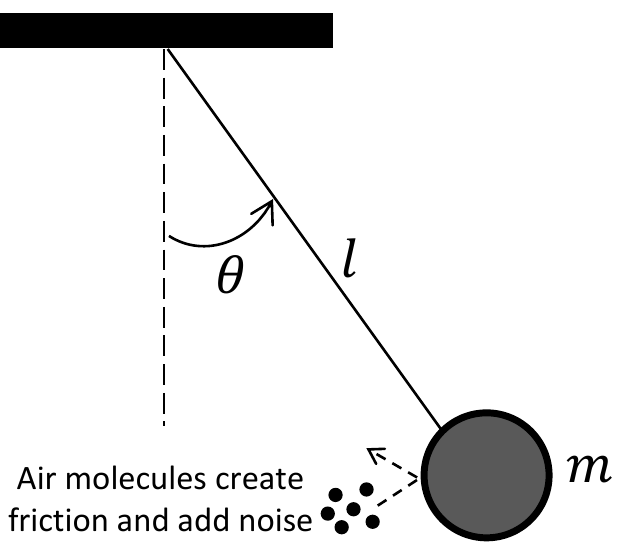}}
	\caption{Schematic of a pendulum interacting with air particles. }
	\label{fig:pendulum}
\end{figure}
In the deterministic setting, dynamics of a simple pendulum (see Fig.~\ref{fig:pendulum}) are given by
\begin{equation}
\frac{d^2 \theta}{d t^2 }+\frac{k}{m}\frac{d \theta}{d t} +\frac{g}{l} \sin \theta = 0
\end{equation}
where $g$ is the acceleration due to gravity, $l$ is the length of the pendulum, and $\theta$ is the angular displacement \cite{kha96}. We also consider friction in our system, with friction constant $k$. In the stochastic formulation, we could consider that the dynamics are affected by white noise that arises due random interaction of pendulum with air molecules. This term scales inversely with mass of the pendulum $m$, i.e., the interaction with gas air particles is negligible for a large mass. By choosing $x_1=\theta $ and $x_2 = \frac{d \theta }{d t}$,  the dynamics of the pendulum can be represented as
\begin{subequations}
	\begin{align}
	dx_1  = & x_2 dt,\\
	dx_2 =& \left(-\frac{k}{m} x_2 -\frac{g}{l}   \sin x_1 \right)dt + \frac{1}{m} dw_t.
	\end{align}\label{x}
\end{subequations} 
Here we have the trigonometric function $\sin x_1$, which gives rise to nonlinear behavior. To illustrate how derivative matching closure can be used in this context, we approximate $\left<\sin x_1\right>$ using \eqref{eq:generatorDiffEq}. To this end, we use Euler's relation to write
\begin{equation}
\sin x_1 = \frac{{\rm e}^{j x_1}-{\rm e}^{-j x_1}}{2j}. 
\end{equation}
With a change of variables, we can use the It\^o formula to transform \eqref{x} to the following
\begin{subequations}
	\begin{align}
	&	d{\rm e}^{j x_1} =  j{\rm e}^{j x_1} x_2dt,\\
	&	d{\rm e}^{-j x_1} =  -j{\rm e}^{-j x_1} x_2dt,\\
	&	dx_2 = \left(-\frac{k }{m} x_2 +\frac{j}{2} \frac{g}{l}{\rm e}^{j x_1} - \frac{j}{2} \frac{g}{l}{\rm e}^{-j x_1}  \right)dt + \frac{1}{m} dw_t.
	\end{align}\label{xx}
\end{subequations}
Fo these dynamics, we can write the moment dynamics with moments of $x_2$ appearing in  the form of monomials, and moments of $x_1$ appearing in the form of complex exponentials as below
\begin{subequations} 
	\begin{align}	
	&	\frac{d\langle {\rm e}^{j x_1}\rangle }{dt} =  j\left\langle {\rm e}^{j x_1 } x_2 \right\rangle ,\\
	& 	\frac{d\langle {\rm e}^{-j x_1 }\rangle }{dt} = -j\left\langle {\rm e}^{-j x_1 } x_2 \right\rangle ,\\
	&	\frac{d\langle x_2 \rangle }{dt} = -\frac{k}{m} \langle x_2 \rangle +\frac{j}{2} \frac{g}{l}\langle {\rm e}^{j x_1 } \rangle - \frac{j}{2} \frac{g}{l}\langle {\rm e}^{-j x_1 } \rangle,
\end{align}	
	\begin{align}	
	&	\frac{d\langle {\rm e}^{j x_1 }x_2\rangle }{dt} =  j\left\langle {\rm e}^{j x_1 } x_2^2 \right\rangle 
	-\frac{k}{m} \langle{\rm e}^{j x_1 }  x_2 \rangle+\frac{j}{2} \frac{g}{l}\langle {\rm e}^{2j x_1 } \rangle - \frac{j}{2} \frac{g}{l} , \\
	&	\frac{d\langle {\rm e}^{-j x_1 }x_2\rangle }{dt} =  -j\left\langle {\rm e}^{-j x_1 } x_2^2 \right\rangle 
	-\frac{k}{m} \langle{\rm e}^{-j x_1 }  x_2 \rangle -\frac{j}{2} \frac{g}{l}\langle {\rm e}^{-2j x_1 } \rangle + \frac{j}{2} \frac{g}{l} , \\ 
	& \frac{d\langle x_2^2 \rangle }{dt} = -2\frac{k}{m} \langle x_2^2 \rangle +j \frac{g}{l}\langle {\rm e}^{j x_1 }x_2  \rangle -j \frac{g}{l}\langle {\rm e}^{-j x_1 } x_2  \rangle +\frac{1}{m^2}, \\ &
	\frac{d\langle {\rm e}^{2j x_1 }\rangle }{dt} =  2j\left\langle {\rm e}^{2j x_1 } x_2 \right\rangle ,\\ 
	& \frac{d\langle {\rm e}^{-2j x_1 }\rangle }{dt} =  -2j\left\langle {\rm e}^{-2j x_1 } x_2 \right\rangle .
	\end{align}\label{expdyn}
\end{subequations}

One way to interpret the above mixed complex exponential monomial moment dynamics is to think that since all moments of $x_2$ are generated by taking expectations of the monomials $1, x_2, x_2^2, \ldots$,  we could consider the terms ${\rm e}^{j x_1}$ and ${\rm e}^{-j x_1}$ as two different variables. The mixed moments can then be generated by taking expectation of the products of the complex exponentials $1, {\rm e}^{-jx_1}, {\rm e}^{-2j x_1}, \ldots$ (or $1, {\rm e}^{jx_1}, {\rm e}^{2j x_1}, \ldots$) with the monomials $1, x_2, x_2^2, \ldots$. The order of the mixed moment can be thought of as the sum of powers of the monomials and complex exponentials. 

Given the above interpretation, the moment dynamics in \eqref{expdyn} are not closed. As per notation in \eqref{eqn:openmomentdynamics}, we have $\bmu=\left[
\left<e^{jx_1}\right> \ \left<e^{-jx_1}\right> \ \left<x_2\right> \ \ldots \left<x_2^2\right>  \right]^\top$, and $\overline{\bmu} = \left[\left\langle {\rm e}^{j x_1 } x_2^2 \right\rangle \  \left\langle {\rm e}^{-j x_1 } x_2^2 \right\rangle  \ \left\langle {\rm e}^{2j x_1 } x_2 \right\rangle \ \left\langle {\rm e}^{-2j x_1 } x_2 \right\rangle \right]^\top$. An important point to note is that since $e^{-jx_1} e^{j x_1}=1$, there is no need to consider their cross-moments. Thus, we only consider cross moments of $e^{-jx_1}$ with $x_2$, and $e^{jx_1}$ with $x_2$.

Next, we present different closure schemes for approximating moments in $\overline{\bmu} $ as nonlinear functions of moments up to order 2. As an example, consider the third-order moment $\left\langle {\rm e}^{j x_1 } x_2^2 \right\rangle$. The aim of closure is to approximate this moment as 
\begin{equation} 
\left\langle {\rm e}^{j x_1 } x_2^2 \right\rangle  \approx  \left\langle {\rm e}^{j x_1 }\right\rangle^{\alpha_1}   \left\langle {\rm e}^{j x_1 } x_2 \right\rangle^{\alpha_2}   \left\langle  x_2 \right\rangle^{\alpha_3}   \left\langle  x_2^2 \right\rangle^{\alpha_4}.
\end{equation}
Performing derivative matching approach as explained in Section III results in 	\begin{align}
\left\langle {\rm e}^{j x_1 } x_2^2 \right\rangle \approx  \frac{\left\langle  x_2^2 \right\rangle}{ \left\langle {\rm e}^{j x_1 }\right\rangle } \frac{\left\langle {\rm e}^{j x_1 } x_2 \right\rangle^2 }{\left\langle  x_2 \right\rangle^2 }.
\end{align}
With a similar approach we can approximate the other moments in the vector $\overline{\bmu}$
\begin{subequations}
	\begin{align}
	& \left\langle {\rm e}^{-j x_1 } x_2^2 \right\rangle \approx  \frac{\left\langle  x_2^2 \right\rangle}{ \left\langle {\rm e}^{-j x_1 }\right\rangle } \frac{\left\langle {\rm e}^{-j x_1 } x_2 \right\rangle^2 }{\left\langle  x_2 \right\rangle^2 }, \\
	& \left\langle {\rm e}^{2j x_1 } x_2 \right\rangle \approx  \frac{\left\langle   {\rm e}^{2j x_1 } \right\rangle}{ \left\langle x_2 \right\rangle } \frac{\left\langle  {\rm e}^{j x_1 } x_2 \right\rangle^2 }{\left\langle  {\rm e}^{j x_1 }  \right\rangle^2 },
	\\
	& \left\langle {\rm e}^{-2j x_1 } x_2 \right\rangle \approx  \frac{\left\langle   {\rm e}^{-2j x_1 } \right\rangle}{ \left\langle x_2 \right\rangle } \frac{\left\langle  {\rm e}^{-j x_1 } x_2 \right\rangle^2 }{\left\langle  {\rm e}^{-j x_1 }  \right\rangle^2 }.
	\end{align}
\end{subequations}
Another approximation that can be used is by assuming that the correlation in between two random variables is small due to presence of noise.  Hence the third order moment $\left\langle {\rm e}^{j x_1 } x_2^2 \right\rangle$ can be approximated as 
\begin{equation}
\left\langle {\rm e}^{j x_1} x_2^2\right\rangle \approx  \left\langle {\rm e}^{j x_1}\right\rangle     \left\langle  x_2^2\right\rangle .
\end{equation}
Similarly the rest of moments in $\overline{\bmu}$ can be approximated as
\begin{subequations}
	\begin{align}
	& \left\langle {\rm e}^{j x_1} x_2^2\right\rangle \approx  \left\langle {\rm e}^{j x_1}\right\rangle     \left\langle  x_2^2\right\rangle,\\
	& \left\langle {\rm e}^{-j x_1 } x_2^2 \right\rangle \approx   \left\langle {\rm e}^{-j x_1}\right\rangle     \left\langle  x_2^2\right\rangle, \\
	& \left\langle {\rm e}^{2j x_1 } x_2 \right\rangle \approx  \left\langle {\rm e}^{2 j x_1}\right\rangle     \left\langle  x_2\right\rangle ,
	\\
	& \left\langle {\rm e}^{-2j x_1 } x_2 \right\rangle \approx   \left\langle {\rm e}^{-2 j x_1}\right\rangle     \left\langle  x_2\right\rangle .
	\end{align}
\end{subequations}
The results of the closure approximations is compared to numerical solutions in Fig. 3. The results show that derivative matching provides reasonably accurate approximation of the moment dynamics.

\begin{figure}[h]
	\centering
	{\includegraphics[width=0.5\textwidth]{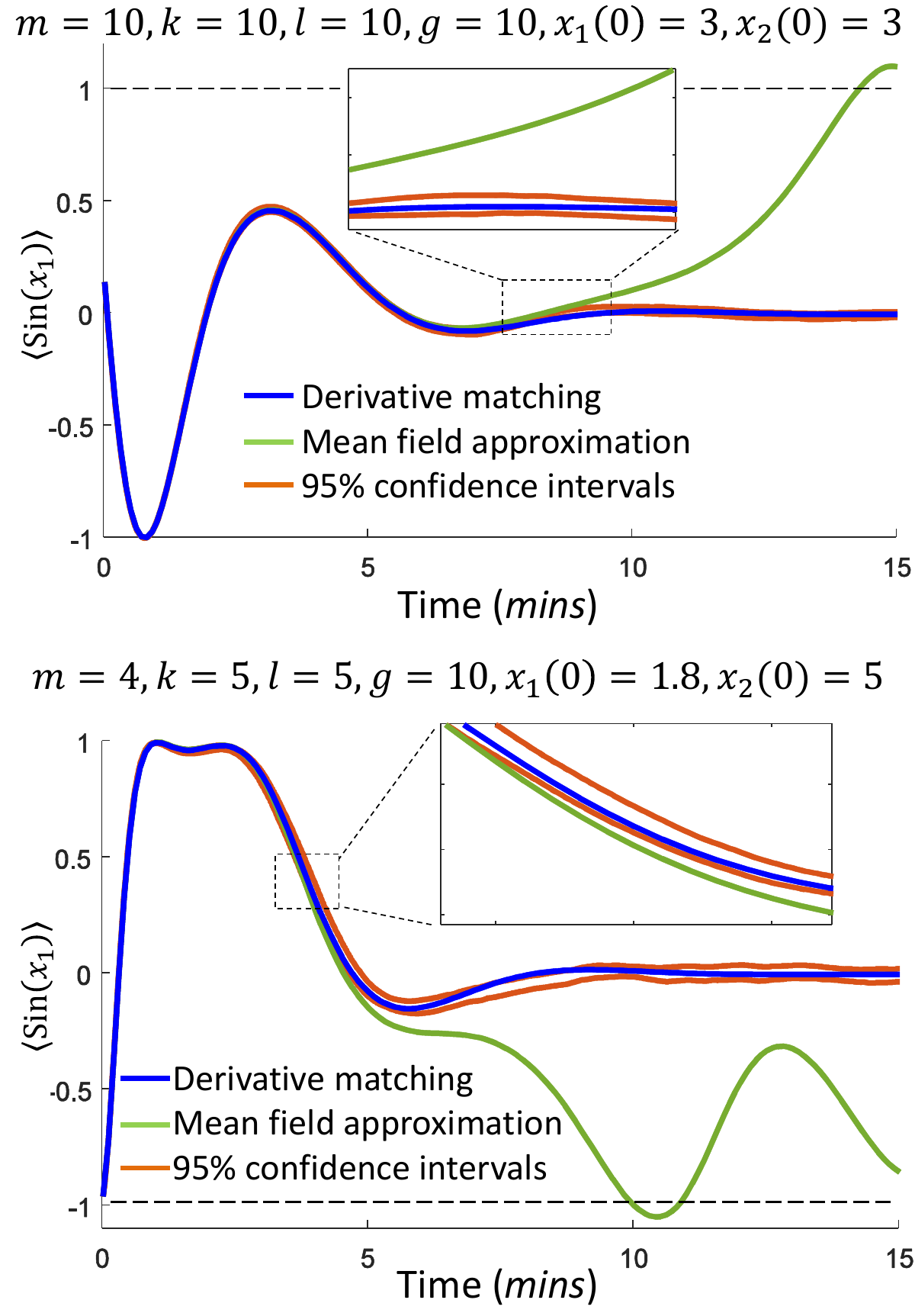}}
	\caption{Derivative Matching provides accurate approximation of the nonlinear function $\langle \sin(x_1) \rangle $. For comparison purpose,  $95\%$ confidence interval of the dynamics as obtained from numerical simulation, and approximate dynamics from a mean field approximation are shown.}	\label{figresults}
\end{figure}

\section{Conclusion}
In this paper, we extended the derivative matching based moment approximation method to stochastic dynamical systems with continuous state. We further illustrated that the method is not limited to polynomial dynamics, and it can be used to study systems that contain trigonometric functions. It would be interesting to extend the technique to other form of mixed functions, and also include differential algebraic inequalities. This would open possibilities of using the moment closure techniques to study a variety of nonlinearities, and has potential applications in power systems analysis. In addition, while in this paper we just considered continuous dynamics modeled through SDEs, many models contain both continuous dynamics and random discrete events \cite{hes06,tss14,hls00}. Deriving derivativ matching closure for such hybrid systems will be another avenue of research. Finally, we note that despite the promising results obtained by closure approximations, generally there are no guarantee on the errors of the closure approximation. Future work will carry out a detailed error analysis using other methods of finding bounds on moments  \cite{lamperski2017analysis}.

\section*{ACKNOWLEDGMENT}
AS is supported by the National Science Foundation Grant DMS-1312926, University of Delaware Research Foundation (UDRF) and Oak Ridge Associated Universities (ORAU).
\appendix
\section{}
\label{app:proof}
\bibliography{bibLoc,RefMaster}
\bibliographystyle{IEEEtran}
\end{document}
%